\newtheorem{theorem}{Theorem}
\newtheorem{observation}{Observation}
\def\QED{\ensuremath{{\square}}}
\def\markatright#1{\leavevmode\unskip\nobreak\quad\hspace*{\fill}{#1}}
\newenvironment{proof}
  {\begin{trivlist}\item[\hskip\labelsep{\bf Proof.}]}
  {\markatright{\QED}\end{trivlist}}
\title{Non-crossing Monotone Paths and Binary Trees in Edge-ordered  Complete Geometric Graphs}
\author{
  Frank Duque \thanks{Instituto de Matem\'aticas, Universidad de Antioquia, Medell\'in, Colombia. \texttt{rodrigo.duque@udea.edu.co}}
  \footnote{Partially supported by CONACYT (Mexico), grant 253261.} \and
  Ruy Fabila-Monroy \thanks{ Departamento de Matem\'aticas, CINVESTAV. \texttt{ruyfabila@math.cinvestav.edu.mx}} \footnotemark[2] \and
  Carlos Hidalgo-Toscano \thanks{INFOTEC Centro de Investigación e Innovación en Tecnologías de la Información y Comunicación. \texttt{carlos.hidalgo@infotec.mx}} \footnotemark[2] \ \and
  Pablo P\'erez-Lantero \thanks{Departamento de Matemática y Ciencia de la Computación, Universidad de Santiago, Chile. \texttt{pablo.perez.l@usach.cl}. Partially supported by projects DICYT 041933PL Vicerrector\'ia de Investigaci\'on, Desarrollo e Innovaci\'on USACH (Chile), and Programa Regional STICAMSUD 19-STIC-02.
}
  }
\begin{document}
\maketitle

\begin{abstract}
An edge-ordered graph is a graph with a total ordering of its edges.
A path $P=v_1v_2\ldots v_k$ in an edge-ordered graph is called increasing if $(v_iv_{i+1}) > (v_{i+1}v_{i+2})$ for all $i = 1,\ldots,k-2$;
it is called decreasing if $(v_iv_{i+1}) < (v_{i+1}v_{i+2})$ for all $i = 1,\ldots,k-2$. We say that $P$ is monotone if it is 
increasing or decreasing. A rooted tree $T$ in an edge-ordered graph is called monotone if either every path from the root of to a leaf 
is increasing or every path from the root to a leaf is decreasing.

Let $G$ be a graph. In a straight-line drawing $D$ of $G$, its vertices are drawn as different points in the plane and its edges are straight line segments. 
Let $\overline{\alpha}(G)$ be the maximum integer such that every edge-ordered straight-line drawing of $G$ 
contains a monotone non-crossing path of 
length $\overline{\alpha}(G)$. 
Let $\overline{\tau}(G)$ be the maximum integer such that every edge-ordered straight-line drawing of $G$ 
contains a monotone non-crossing complete binary tree of size $\overline{\tau}(G)$. 
In this paper we show that $\overline \alpha(K_n) = \Omega(\log\log n)$, $\overline \alpha(K_n) = O(\log n)$, $\overline \tau(K_n) = \Omega(\log\log \log n)$ and $\overline \tau(K_n) = O(\sqrt{n \log n})$.
\end{abstract}

\section{Introduction}
An \emph{edge-ordering} of a graph is a total order of its edges, an \emph{edge-ordered} graph is a graph with an edge-ordering. A path $P=v_1v_2\ldots v_k$ in an edge-ordered graph is called increasing if $(v_iv_{i+1}) > (v_{i+1}v_{i+2})$ for all $i = 1,\ldots,k-2$; it is called decreasing if $(v_iv_{i+1}) < (v_{i+1}v_{i+2})$ for all $i = 1,\ldots,k-2$. We say that $P$ is \emph{monotone} if it is increasing or decreasing. 
Let $G$ be a graph. Let $\alpha(G)$ be the maximum integer such that $G$ has a monotone path of length $\alpha(G)$ under any edge-ordering. The parameter $\alpha(G)$ is often called the \emph{altitude} of $G$.

Chv\'atal and Koml\'os~\cite{ChvatalKomlos} were the first to pose, in 1971, the problem of estimating the altitude of $K_n$. This problem has turned out to be challenging and has been studied by several researchers. The first related result is due to Graham and Kleitman~\cite{IncreasingPaths}, who showed in 1973 the following bounds:
\[ \frac{1}{2} \left( \sqrt{4n-3}-1 \right) \le \alpha(K_n) < \frac{3}{4}n.\]
They also conjectured the upper bound to be right order of magnitude of $\alpha(K_n)$.
More than ten years after, Calderbank, Chung, and Sturtevant~\cite{BlockSums} improved the upper bound to
\[ \alpha(K_n) \le \left(\frac{1}{2}+o(1)\right)n. \]
The lower bound by Graham and Kleitman remained the best known for over 40 years, until Milans~\cite{milans2017monotone} showed in 2017 that
\[ \left(\frac{1}{20}-o(1) \right) \left(\frac{n}{\lg n} \right)^{2/3} \le \alpha(K_n).\]
Recently, Buci\'c et al.~\cite{bucic2018nearly} showed that
\[ \alpha(K_n) \ge  \frac{n}{2^{O(\sqrt{\log n \log \log n})}} = n^{1-o(1)},\]
almost closing the gap between the upper and lower bounds.

In the meantime, several variations and specific cases of the altitude of a graph have been studied.
Bialostocki and Roditty~\cite{AMonotone} showed in 1987 that a graph $G$ has altitude at least three if and only if $G$ contains as a subgraph an odd cycle of length at least five or one of six fixed graphs. 

Yuster \cite{BoundedDegree} studied in 2001 the parameter $\alpha_{\Delta}$, defined as the maximum of $\alpha(G)$ over all graphs with maximum degree at most $\Delta$. He proved that $\Delta(1-o(1)) \le \alpha_\Delta \le \Delta +1$.
That same year Roditty, Shoham, and Yuster \cite{SparseGraphs} gave bounds on the altitude of several families of sparse graphs. In particular, if $G$ is a planar graph then $\alpha(G) \le 9$ and there exist planar graphs with $6 \le \alpha(G)$. They also proved that if $G$ is a bipartite planar graph then $\alpha(G) \le 6$ and there exist bipartite planar graphs with $4 \le \alpha(G)$.

Mynhardt, Burger, Clark, Falvai  and Henderson~\cite{girth} characterized in 2005 cubic graphs with girth at least five and altitude three. They also showed that if $G$ is an $r$-regular graph ($r \ge 4$) and has girth at least five, then $\alpha(G) \ge 4$.

De Silva, Molla, Pfender, Retter and Tait~\cite{hypercube} showed in 2015  that $d/\log d \le \alpha(Q_d)$, where $Q_d$ is the $d$-dimensional hypercube.

Lavrov and Loh~\cite{RandHam} studied in 2016  the length of a longest monotone path in  $K_n$ under a random edge-ordering. They showed that, with probability at least $1/e-o(1)$, $K_n$ contains a monotone Hamiltonian path. They also conjectured that, given a random ordering,  $K_n$ contains a monotone Hamiltonian path with probability tending to 1. Shortly after, Martinsson \cite{randomconjecture} proved this conjecture.


In this paper we study $\alpha(G)$ and similar parameters in the context of straight-line drawings of graphs. In a drawing
$D$ of $G$ in the plane, the vertices of $G$ are drawn as different points, and
every edge is drawn as a straight-line segment connecting the two points representing
its vertices. We require that in $D$ the edges do not pass through a point
representing a vertex other than their endpoints.

Let $D$ be a straight-line drawing of a graph $G$. Let $\overline \alpha(D)$ be the minimum over all edge-orderings of $D$ of the maximum length of a non-crossing monotone path in $D$. We denote by $\overline \alpha(G)$ the minimum of $\overline \alpha(D)$ over all straight-line drawings of $G$. 

Now we define parameters similar to $\overline \alpha$ related to binary trees instead of paths. From now on, all binary trees are complete and rooted. 

A rooted tree in an edge-ordered graph $G$ is increasing (decreasing) if every path from the root to a leaf is increasing (decreasing). It is called monotone if it is increasing or decreasing. Let $\overline \tau_+(D)$ ($\overline \tau_-(D)$) be the minimum over all edge-orderings of the maximum size of a non-crossing increasing (decreasing) binary tree in $D$. Let $\overline\tau_+(G)$ ($\overline\tau_-(G)$) be the minimum of $\overline \tau_+(D)$ ($\overline \tau_-(D)$) over all straight-line drawings of $G$. 

Similarly, let $\overline \tau(D)$ be the minimum over all edge-orderings of the maximum size of a non-crossing monotone binary tree in $D$ and $\overline\tau(G)$ the minimum of $\overline \tau(D)$ over all straight-line drawings of $G$.

In this paper we prove that $\overline{\alpha}(K_n)=\Omega(\log\log n)$ and $\overline{\alpha}(K_n)=O(\log n)$. For the parameter $\overline \tau$ we prove that $\overline \tau(K_n) = \Omega(\log \log \log n)$ and $\overline \tau(K_n) = O(\sqrt{n \log n})$. As an intermediate result, if we are interested in bounding the size of increasing or decreasing binary trees but not both, we prove that $\overline \tau_+(K_n) = O(\log n)$ and $\overline \tau_-(K_n) = O(\log n)$.

\subsection{Monotone Non-crossing Paths} \label{convex}

In this section we give bounds for $\overline \alpha (K_n)$. A straight-line drawing $D$ of a graph $G$ is said to be \emph{convex} if its vertices are in convex position. 


\begin{observation} \label{twoedges}
	Let $S$ be a set of points in convex position and $\ell$ a straight line that partitions $S$ into two nonempty sets $U$ and $V$. The maximum length of a non-crossing polygonal chain whose vertices alternate between $U$ and $V$ and whose edges increase in slope is two.
\end{observation}
\begin{proof}
	Assume that a polygonal chain $P$ of length three with the conditions of the statement exists. Let $p, q, r, s$ denote the vertices of $P$ and assume without loss of generality that $q$ has smaller $x$ coordinate than $r$. The edges of $P$ have increasing slope, thus both $p$ and $s$ lie to the right of the directed line from $q$ to $r$. Since the points $p, q, r, s$ are in convex position and the points $q$ and $s$ lie on a different semiplane than the points $p$ and $r$ with respect to $\ell$, the edges $(pq)$ and $(rs)$ are the diagonals of a convex quadrilateral. These diagonals intersect; a contradiction the non-crossing property of $P$.
\end{proof}

\begin{figure}[h] 
	\centering 
	\includegraphics[page=1]{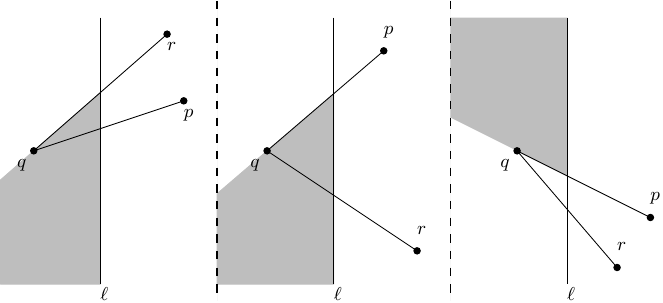}
	\caption{The fourth vertex of a polygonal with increasing slope must be in the shaded region.}
	\label{fig:twoedges}
\end{figure}

\begin{theorem}\label{thm:alphaLower}
	$\overline \alpha(K_n) = \Omega(\log\log n)$.
\end{theorem}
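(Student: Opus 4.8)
The plan is to prove the lower bound $\overline{\alpha}(K_n) = \Omega(\log\log n)$ by first reducing to the convex case and then applying a Ramsey-type / recursive argument that exploits Lemma~\ref{twoedges}. More precisely, I would first argue that it suffices to handle convex straight-line drawings: given any straight-line drawing $\widetilde{K_n}$, one can find a large subset of the $n$ points in convex position (by the Erd\H{o}s--Szekeres theorem, a subset of size $\Omega(\log n)$), and a non-crossing monotone path in the complete geometric graph on this convex subset is a non-crossing monotone path in $\widetilde{K_n}$. Since we are only aiming for a $\log\log n$ bound, passing from $n$ to $\Omega(\log n)$ points costs only a constant factor in the final estimate, so it is enough to prove that every edge-ordered convex complete geometric graph on $m$ points contains a non-crossing monotone path of length $\Omega(\log m)$.

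For the convex case I would set up a recursion on the number of points. Fix a convex point set $S$ with a total order on the edges. The idea is to use a ``divide and halve'' scheme: pick a line $\ell$ splitting $S$ into two halves $U$ and $V$ of (roughly) equal size, consider the bipartite family of edges between $U$ and $V$, and use the edge-order together with Lemma~\ref{twoedges} to extract structure. Lemma~\ref{twoedges} tells us that among the crossing edges one cannot have a long slope-monotone non-crossing alternating chain; the contrapositive is that if we want a long monotone non-crossing path we are forced to mostly stay inside one side, which is exactly what makes a recursion possible. Concretely, I would try to show that a monotone non-crossing path of length $f(m)$ can be built by taking a monotone non-crossing path of length $f(m/2)$ inside $U$, and another inside $V$, and joining them with one carefully chosen edge across $\ell$ whose rank in the edge-order is compatible with both sub-paths (this is where one uses that the path only needs its consecutive edges comparable, and the join edge sits at one interface). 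Iterating the recursion $f(m) \ge f(m/2) + \Omega(1)$, or more likely a weaker recursion of the form $f(m) \ge f(\sqrt{m}) + \Omega(1)$ or $f(m) \ge f(g(m)) + 1$ for a suitable decreasing $g$, yields $f(m) = \Omega(\log m)$ (respectively $\Omega(\log\log m)$ before the Erd\H{o}s--Szekeres loss, which is consistent with the claimed $\Omega(\log\log n)$).

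The main obstacle I anticipate is controlling the interaction between the edge-ordering and the geometry simultaneously when performing the recursive join. Inside $U$ and $V$ we get sub-paths that are monotone and non-crossing, but their first and last edges have edge-ranks we do not control, and the connecting edge across $\ell$ must (i) not cross either sub-path and (ii) have an edge-rank that lies correctly with respect to the end edges of the sub-paths to preserve monotonicity of the concatenation. Making both conditions hold at once is the crux: I expect to need a pigeonhole over many candidate split lines or many candidate endpoints, or to recurse not on a fixed half but on a sub-structure chosen after looking at where the extreme-ranked edges lie, so that the geometry (convexity, the non-crossing constraint from Lemma~\ref{twoedges}) forces enough of the path into one cell that a clean join edge survives. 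A secondary technical point is bookkeeping the ``increasing vs.\ decreasing'' dichotomy through the recursion — one typically tracks both and at the end a two-coloring / pigeonhole step picks the majority type — and ensuring the length counts lengths (number of edges) rather than vertices so the recursion constants come out right.
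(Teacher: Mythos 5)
Your plan does not close the main gap, and you say so yourself: the ``crux'' of making the connecting edge across $\ell$ simultaneously non-crossing and rank-compatible with both sub-paths is exactly the step you leave unresolved, and the recursion $f(m)\ge f(m/2)+\Omega(1)$ is never established. Worse, the tool you propose to lean on points the wrong way. Lemma~\ref{twoedges} is a statement about chains that are monotone \emph{in slope}; it is used in the paper only for the \emph{upper} bound (Theorem~\ref{thm:alphaUpper}), where the adversary deliberately orders edges by slope so that the lemma limits any monotone path to two crossing edges per recursion level. For the lower bound you must defeat an \emph{arbitrary} edge-ordering, for which monotonicity in the edge-order has no relation to slopes, so the lemma gives you no purchase on where a monotone path must go. Your intermediate claim --- that every edge-ordered convex complete geometric graph on $m$ points contains a non-crossing monotone path of length $\Omega(\log m)$ --- is also much stronger than anything the paper proves (in convex position the paper only obtains $\Omega(\log\log m)$, via the same Ramsey argument), so you would be assuming an unproved and possibly false statement.

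The paper's actual proof is shorter and avoids convexity entirely. Sort the vertices $v_1,\ldots,v_n$ by $x$-coordinate and $2$-color the triples of the complete $3$-uniform hypergraph: color $(v_i,v_j,v_k)$ with $i<j<k$ blue if $(v_iv_j)<(v_jv_k)$ and red otherwise. Hypergraph Ramsey gives a monochromatic set of size $m=\Omega(\log\log n)$, and reading off its vertices in $x$-order yields a path whose consecutive edge comparisons are all of the same type, i.e.\ a monotone path; it is automatically non-crossing because it is $x$-monotone. Note that the $\log\log$ comes from the tower-type Ramsey bound for $3$-uniform hypergraphs, not from an Erd\H{o}s--Szekeres loss composed with a $\log$. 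If you want to salvage your outline, the honest fix is to drop the divide-and-conquer and the appeal to Lemma~\ref{twoedges} and instead find a mechanism (like the triple-coloring above) that controls the relative order of \emph{consecutive} edges along a geometrically forced non-crossing structure.
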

\begin{proof}	
	Let $D$ be an edge-ordered straight-line drawing of $K_n$ and assume without loss of generality that no two vertices of $D$ have the same $x$-coordinate. Let $v_1, \ldots, v_n$ be the vertices of $D$ ordered by increasing  $x$-coordinate, and let $H$ be the complete 3-uniform hypergraph with same vertex set as $D$. For $0\le i<j<k\le n$, color the edge $(v_i,v_j,v_k)$ of $H$ blue if $(v_iv_j) < (v_jv_k)$, otherwise color it red. From Ramsey's Theorem, there exists a complete monochromatic sub-hypergraph $K$ of size $m=\Omega(\log\log n)$ in $H$ with vertices $v_{i_1},v_{i_2},\ldots, v_{i_m}$. Then $P=v_{i_1},v_{i_2},\ldots, v_{i_m}$ is a monotone path of length $\Omega(\log\log n)$ in $D$. Note that $P$ is $x$-monotone, thus it has no crossings.
\end{proof}

\begin{theorem}\label{thm:alphaUpper}
	$\overline \alpha(K_n) = O(\log n)$.
\end{theorem}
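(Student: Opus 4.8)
The plan is to exhibit one convex straight-line drawing of $K_n$ together with one edge-ordering in which every non-crossing monotone path has only $O(\log n)$ vertices. Place the vertices at points $(t_1,t_1^2),\dots,(t_n,t_n^2)$ on the parabola $y=x^2$ with $t_1<\dots<t_n$ chosen generically, so the $n$ vertices are in convex position, their order along the hull is the order by $x$-coordinate, and all $\binom{n}{2}$ chords have pairwise distinct slopes. The edge-ordering is defined recursively: given a current vertex set $S$ with $|S|\ge 2$, let $\ell$ be a vertical line splitting $S$ into a left half $U$ and a right half $V$, and partition the edges among $S$ into $E_{UU}$, $E_{VV}$, $E_{UV}$ (according to how many endpoints lie in $U$). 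Order all of $E_{UU}$ below all of $E_{VV}$ below all of $E_{UV}$; order $E_{UU}$ and $E_{VV}$ internally by recursing on $U$ and on $V$; and order $E_{UV}$ internally by increasing slope. The key design choice is that the ``across'' edges $E_{UV}$ sit at an extreme of the order rather than in the middle.

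Let $f(m)$ be the maximum number of vertices of a monotone non-crossing path in the $m$-vertex instance of this construction; the plan is to prove $f(m)\le f(\lceil m/2\rceil)+2$. Take a monotone non-crossing path $P=v_1v_2\cdots v_k$ in the $n$-vertex instance, with top-level split $U,V$. Traversing $P$ in the direction of decreasing edge-weight, and using that $E_{UU},E_{VV},E_{UV}$ occupy three consecutive intervals of the order, the edges of $P$ appear as a (possibly empty) block lying in $E_{UV}$, followed by a block in $E_{VV}$, followed by a block in $E_{UU}$. The $E_{UV}$-block is a non-crossing sub-path all of whose edges join $U$ to $V$, so its vertices alternate between the two sides of $\ell$; since its edge-weights are strictly monotone and $E_{UV}$ is ordered by slope, its slopes are strictly monotone (consecutive edges of a path share a vertex, hence are never parallel, so weak monotonicity of slopes upgrades to strict). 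Lemma~\ref{twoedges}, applied to this chain or to its reverse, then bounds the $E_{UV}$-block by two edges, i.e.\ three vertices. The remaining part of $P$ uses only edges of $E_{UU}\cup E_{VV}$, and any path using only such edges is contained entirely in $U$ or entirely in $V$; hence that part is a monotone non-crossing path in a sub-instance on at most $\lceil n/2\rceil$ vertices, so it has at most $f(\lceil n/2\rceil)$ vertices. Since the two parts overlap in at most one vertex, $k\le f(\lceil n/2\rceil)+2$.

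With the base case $f(1)=1$, the recurrence $f(m)\le f(\lceil m/2\rceil)+2$ gives $f(n)=O(\log n)$, and therefore $\overline\alpha(K_n)\le f(n)=O(\log n)$, as claimed.

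I expect the main obstacle, and the part needing the most care, to be justifying that the recursion genuinely halves the instance — this is exactly what fails if $E_{UV}$ is placed in the middle of the order, since then a monotone path could run through a long piece of $U$, then a short $E_{UV}$-piece, then a long piece of $V$, yielding only $f(m)\le 2f(m/2)+O(1)$ and an $O(m)$ bound. Putting $E_{UV}$ at an extreme forces the non-$E_{UV}$ portion of a monotone path to be confined to a single half, which is what makes the single-recursive-call inequality hold; one must also check the slope bookkeeping above so that Lemma~\ref{twoedges} really applies to the $E_{UV}$-block (including the point that reversing a chain changes neither its alternation pattern nor its crossing pattern), and that distinct edge slopes can indeed be arranged by a generic choice of the $t_i$.
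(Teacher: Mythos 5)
Your proposal is correct and follows essentially the same route as the paper: a convex drawing split by a vertical line, crossing edges ordered by slope and placed at an extreme of the order, Lemma~\ref{twoedges} bounding the crossing block by two edges, and the recurrence $T(n)\le T(\lceil n/2\rceil)+2$. The minor differences (explicit parabola placement, separating $E_{UU}$ from $E_{VV}$ in the order, counting vertices rather than edges) do not change the argument.
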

\begin{proof}
	Let $D$ be a convex straight-line drawing of $K_n$. Let $\ell$ be a vertical line that partitions the vertices of $D$ into two no empty sets $U$ and $W$, each of size at most $\lceil n/2 \rceil$. Order the edges between $U$ and $W$ so that $e<e'$ if and only if the slope of $e$ is less than the slope of $e'$. Recursively order the edges of $D[U]$ and $D[W]$ as before. Extend these orders to $D$ by declaring the edges in $D[U] \cup D[W]$ to be less than those between $U$ and $W$. Let $P$ be a monotone path of maximum length in $D$. By Observation~\ref{twoedges}, there are at most two edges of $P$ between sets in the same level of recursion. Moreover, $P$ cannot have edges both in $D[U]$ and in $D[W]$, since there would be a subpath with an edge in $D[U]$, an edge between $U$ and $W$ and an edge in $D[W]$; this path cannot be monotone. Thus, the length $T(n)$ of $P$ satisfies the recursion $T(n) \le 2+T(n/2)$, which implies that $T(n) = O(\log n)$.
\end{proof}

\subsection{Monotone Non-crossing Binary Trees}

We begin this section with a lower bound for $\overline \tau( D)$, where $D$ is a convex straight-line drawing of $K_n$. We use the same argument used to bound $\overline \alpha(K_n)$.

\begin{theorem} \label{treesconvexlow}
	Let $D$ be an edge-ordered convex straight-line drawing of $K_n$. Then $\overline \tau(D) = \Omega(\log\log n)$.
\end{theorem}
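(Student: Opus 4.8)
The plan is to reuse the argument of Theorem~\ref{thm:alphaLower} to extract a large vertex set on which every $x$-monotone chain is automatically monotone in the edge-order, and then to exhibit on that set a non-crossing complete binary tree all of whose root-to-leaf paths are $x$-monotone. So, exactly as in Theorem~\ref{thm:alphaLower}, I would first rotate the plane so that the vertices $v_1,\dots,v_n$ of $\widetilde K_n$ have pairwise distinct $x$-coordinates, listed in increasing order, and colour each triple $\{v_i,v_j,v_k\}$ with $i<j<k$ according to whether $(v_iv_j)<(v_jv_k)$ or not. Ramsey's theorem for $3$-uniform hypergraphs yields a set $S$ of $m=\Omega(\log\log n)$ vertices all of whose triples receive the same colour; assume it is the ``increasing'' colour (the opposite case is symmetric and produces a decreasing tree). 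Then for $p,q,r\in S$ with $x(p)<x(q)<x(r)$ we have $(pq)<(qr)$, so any path through vertices of $S$ along which the $x$-coordinate strictly increases is an increasing path, and being $x$-monotone it is non-crossing. It remains to organise such paths into a complete binary tree.

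Next I would pass from convex position to a one-sided chain. The points of $S$ are the vertices of a convex polygon whose leftmost and rightmost points split its boundary into an upper and a lower chain; one of these two chains, say $C=(c_1,\dots,c_t)$ listed by increasing $x$-coordinate, has $t\ge m/2$ points, and after a vertical reflection (which preserves $x$-order, crossings and the edge-order) we may assume $C$ is \emph{concave}: for $i<j<k$ the point $c_j$ lies strictly above segment $c_ic_k$, so every segment joining two of $c_{i+1},\dots,c_{k-1}$ also lies strictly above $c_ic_k$. Now I would fix $h$ with $2^{h+1}-1\le t$ and $2^{h+1}-1=\Omega(m)$, and build recursively a perfect binary tree on each block $c_i,\dots,c_j$ of size $2^{s+1}-1$: make $c_i$ the root, split the remaining points into a first half $c_{i+1},\dots,c_{\mu}$ and a second half $c_{\mu+1},\dots,c_j$ of sizes $2^{s}-1$ each (so $\mu=i+2^{s}-1$), recursively build such a tree on each half rooted at its leftmost point, and join $c_i$ to the two roots $c_{i+1}$ and $c_{\mu+1}$.

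I would then verify the two required properties by induction on $s$. Along every root-to-leaf path the $x$-coordinate strictly increases, so by the choice of $S$ every such path is increasing and the tree is monotone. For the non-crossing property, the left subtree and the right subtree lie in disjoint vertical strips, and the edge $c_ic_{i+1}$ lies in a vertical strip disjoint from the right subtree and from every edge of the left subtree not incident to $c_{i+1}$; the only edge needing a separate argument is $c_ic_{\mu+1}$, but all vertices of the left subtree lie strictly between $c_i$ and $c_{\mu+1}$ in $x$-coordinate, hence strictly above segment $c_ic_{\mu+1}$ by concavity of $C$, so no edge of the left subtree can meet it. Since edges sharing an endpoint do not cross in general position, the tree is non-crossing. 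It is therefore a non-crossing monotone complete binary tree with $2^{h+1}-1=\Omega(m)=\Omega(\log\log n)$ vertices, as required.

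The step I expect to be the real obstacle is the reduction to a one-sided chain. The non-crossing check relies on all interior vertices of a block lying on the same side of the shortcut edge from the block's root to the root of its right half, and this genuinely fails for points in general convex position, where interior vertices occur both above and below that segment. Restricting to the larger of the upper and lower chains fixes this while costing only a factor of $2$ in the number of vertices, so the bound remains $\Omega(\log\log n)$; had we instead needed a convex ``cup'' or ``cap'' produced by Erd\H{o}s--Szekeres, we would have lost a square root and obtained only $\Omega(\sqrt{\log\log n})$.
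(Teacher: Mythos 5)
Your proposal is correct and follows essentially the same route as the paper: the identical Ramsey colouring of ordered triples, restriction to the half of the monochromatic set lying on one chain of the convex hull, and the same recursive embedding of a perfect binary tree with each block rooted at its leftmost point. The only difference is that you spell out the $x$-monotonicity and concavity arguments for the non-crossing property, which the paper's proof leaves as ``by construction.''
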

\begin{proof}
	Let $v_1, v_2, \ldots, v_n$ be the vertices of $D$ ordered by increasing $x$-coordinate and let $H$ be the complete 3-uniform hypergraph with same vertex set as $D$. For $0\le i<j<k\le n$, color the edge $(v_i,v_j,v_k)$ of $H$ blue if $(v_iv_j) < (v_jv_k)$, otherwise color it red. From Ramsey's theorem, there exists a complete monochromatic sub-hypergraph $K$ of size $m=\Omega(\log\log n)$ in $H$. Assume without loss of generality that at least half of the vertices of $K$ belong to the lower convex hull of the vertices of $D$. Let $v_{i_1},\ldots,v_{i_m}$ denote those vertices ordered by increasing $x$-coordinate. Let $m'$ be the largest integer of the form $2^{h}-1$, for some integer $h$, such that  $m'\le m$. We embed a binary tree $T$ with vertices $v_{i_1},\ldots,v_{i_{m'}}$ as follows. Place the root of $T$ at $v_{i_1}$, and inductively place its left and right subtrees at the vertices $v_{i_2},\ldots,v_{i_{(m'+1)/2}}$ and $v_{i_{(m'+1)/2+1}},\ldots,v_{i_{m'}}$ with roots at $v_{i_2}$ and $v_{i_{(m'+1)/2+1}}$, respectively (see Figure \ref{fig:convexTree}). Note that $T$ is monotone and has no crossings by construction.
\end{proof}

\begin{figure}[h]
	\centering
	\includegraphics[page=2]{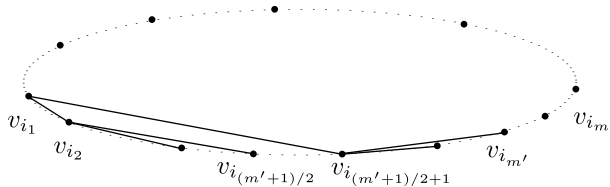}
	\caption{A non crossing binary tree in $H$.}
	\label{fig:convexTree}
\end{figure}

\begin{theorem} \label{theo:lowertau}
	$\overline \tau(K_n) = \Omega(\log \log \log n)$.
\end{theorem}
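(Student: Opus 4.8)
The plan is to bootstrap the convex case (Theorem~\ref{treesconvexlow}) up to the general case by another application of Ramsey's theorem, this time to impose convex-position structure on a large subset of the vertices. Given an arbitrary edge-ordered straight-line drawing $\widetilde K_n$ of $K_n$, I would first invoke the Erd\H{o}s--Szekeres theorem (or, more precisely, its ``happy ending'' form): among any $n$ points in general position there is a subset of size $\Omega(\log n)$ in convex position. Restricting $\widetilde K_n$ to such a subset $S$, we obtain an edge-ordered convex straight-line drawing $\widetilde K_{|S|}$ of $K_{|S|}$ with $|S| = \Omega(\log n)$, whose induced edge-ordering is inherited from the original.

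Next I would apply Theorem~\ref{treesconvexlow} to $\widetilde K_{|S|}$. That theorem guarantees a non-crossing monotone complete binary tree of size $\Omega(\log\log|S|)$ inside $\widetilde K_{|S|}$. Since this tree is non-crossing and monotone as a subgraph of $\widetilde K_{|S|}$, and $\widetilde K_{|S|}$ is an induced subgraph of $\widetilde K_n$ with the restricted order, the tree is also a non-crossing monotone binary tree in $\widetilde K_n$ with respect to the original edge-ordering. Chaining the two bounds gives a tree of size $\Omega(\log\log|S|) = \Omega(\log\log(\Omega(\log n))) = \Omega(\log\log\log n)$, which is exactly the claimed bound. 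Since $\widetilde K_n$ and its edge-ordering were arbitrary, this proves $\overline\tau(K_n) = \Omega(\log\log\log n)$.

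The only subtlety — and the step I would be most careful about — is making sure the Erd\H{o}s--Szekeres bound is used in the correct direction and that convexity is genuinely preserved under taking subsets (it is: any subset of a point set in convex position is itself in convex position), and that ``non-crossing'' is a property that only depends on the subdrawing, so a non-crossing tree in $\widetilde K_{|S|}$ remains non-crossing in $\widetilde K_n$. There is no real obstacle here beyond bookkeeping; the heavy lifting was already done in Theorem~\ref{treesconvexlow}, and this theorem is essentially a corollary obtained by prepending one more Ramsey-type reduction. One should also note that the quantitative loss is a single extra logarithm, which is why the triple-log appears rather than the double-log of the convex case.
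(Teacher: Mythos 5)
Your proposal is correct and is essentially identical to the paper's own proof: the paper likewise applies the Erd\H{o}s--Szekeres theorem to extract a convex subset of size $\Omega(\log n)$ and then invokes Theorem~\ref{treesconvexlow} on the induced convex sub-drawing. The bookkeeping points you raise (convexity and the non-crossing property being preserved under restriction) are valid and unproblematic.
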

\begin{proof}
	Let $D$ be a straight-line drawing of $K_n$. By the Erd\H{o}s-Szekeres Theorem, there exists a set of $\Omega(\log n)$ vertices of $D$ in convex position; the bound follows from Theorem \ref{treesconvexlow}.
\end{proof}

When we search for monotone paths, there is no need to distinguish between  increasing and  decreasing paths---traversing an increasing path in opposite direction gives us a decreasing path of the same length and vice versa. This is not the case with binary trees. Theorem~\ref{theo:lowertau} guarantees that we can find a monotone binary tree of size $\Omega(\log\log\log n)$ in any edge-ordered straight-line drawing of $K_n$, which can be increasing or decreasing. 

An upper bound on $\overline \tau(K_n)$ must take into account both  increasing and  decreasing binary trees. We start by bounding $\overline  \tau_+(K_n)$ and $\overline \tau_-(K_n)$. 

\begin{theorem} \label{treesconvexup1}
	$\overline \tau_+(K_n) = O(\log n)$ and $\overline \tau_-(K_n) = O(\log n)$.
\end{theorem}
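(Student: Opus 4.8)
The plan is to mimic the construction used in Theorem~\ref{thm:alphaUpper}: build a recursive edge-ordering on a convex straight-line drawing of $K_n$ so that any increasing binary tree is forced to live almost entirely inside one half, and the number of edges it can use while crossing between halves is bounded by a constant. First I would take a convex straight-line drawing $\widetilde K_n$ and a vertical line $\ell$ splitting the vertices into $U$ and $W$ with $|U|,|W|\le\lceil n/2\rceil$. As before, I order the edges between $U$ and $W$ by increasing slope, recursively order $\widetilde K_n[U]$ and $\widetilde K_n[W]$ the same way, and declare every edge inside a part to be smaller than every edge crossing between the parts (at the top level, and analogously at each level of the recursion).

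The key structural claim is the tree analogue of Lemma~\ref{twoedges}: in an increasing binary tree $T$ with this ordering, only a bounded-size portion of $T$ can touch the top-level crossing edges. Because all crossing edges are larger than all internal edges, on any root-to-leaf path the crossing edges must form a prefix; so the set of edges of $T$ that cross $\ell$ forms a subtree $T'$ containing the root. Each such crossing edge goes between $U$ and $W$, so $T'$ alternates sides level by level — it is a binary tree all of whose root-to-leaf paths are non-crossing polygonal chains with strictly increasing slopes, alternating between the two sides of $\ell$. By Lemma~\ref{twoedges} each such path has length at most two, so $T'$ has height at most two and hence at most $7$ vertices (in fact I would argue a sharper bound: a depth-$2$ binary subtree uses at most $4$ crossing edges along two sibling root-to-leaf paths, and the non-crossing condition among the four leaf segments forces these to be even more restricted — but height $\le 2$, size $O(1)$, is all that is needed). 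Below $T'$, the tree detaches into subtrees each contained entirely in $\widetilde K_n[U]$ or entirely in $\widetilde K_n[W]$; note a subtree cannot use internal edges of both $U$ and $W$, since passing from one to the other requires a crossing edge, which would have to appear below a non-crossing edge on some root-to-leaf path, contradicting that crossing edges form a prefix.

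Putting this together, an increasing binary tree in $\widetilde K_n$ consists of a constant-size top part attached to at most a constant number of increasing binary trees, each drawn on a convex point set of size at most $\lceil n/2\rceil$ with the recursively-defined ordering. Writing $f(n)$ for the maximum size of an increasing non-crossing binary tree forced by this construction, we obtain $f(n)\le c + c\cdot f(\lceil n/2\rceil)$ for an absolute constant $c$ — wait, that recursion is too weak, since $c>1$ gives a polynomial bound. The fix, and the point I expect to be the main obstacle, is to observe that a \emph{binary} tree of size $s$ has only $O(\log s)$ vertices on its longest root-to-leaf path, and the recursion should instead be set up along a single root-to-leaf path: on any root-to-leaf path of an increasing binary tree, the edges split into a prefix of crossing edges (at most two, by Lemma~\ref{twoedges}) followed by a path lying inside one half; hence the \emph{height} $h(n)$ of any increasing non-crossing binary tree satisfies $h(n)\le 2 + h(\lceil n/2\rceil)$, giving $h(n)=O(\log n)$, and therefore its size — being at most $2^{h(n)+1}$ — is $O(\mathrm{poly}(n))$, which is still not good enough. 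So the real argument must bound size directly: let $g(n)$ be the max size; the top crossing-subtree $T'$ has $\le 3$ vertices, so $T$ is $T'$ with at most $4$ subtrees hanging off its leaves/gaps, each an increasing non-crossing binary tree inside a half, giving $g(n)\le 3 + 4\,g(\lceil n/2\rceil)$. This still yields only $g(n)=O(n^2)$; to reach $O(\log n)$ one must exploit that a complete binary tree of size $s$ cannot have $4$ full-sized complete binary subtrees at shallow depth unless $s$ is itself large in a self-defeating way — concretely, a complete binary tree of size $s=2^k-1$ restricted to the vertices below a depth-$2$ cut consists of exactly $4$ complete binary trees each of size $2^{k-2}-1$, so $g(n)$ for complete trees satisfies $g(n) = 3 + 4 g'(\lceil n/2 \rceil)$ only in the degenerate reading; the correct statement is that if a complete binary tree of size $2^k-1$ embeds, then after removing a constant-height top it leaves complete binary trees of size $2^{k-O(1)}-1$ embedding into halves, so $k \le O(1) + k'$ with $2^{k'}-1 \le g(\lceil n/2\rceil)$, i.e. $g(n) \le 2^{O(1)} g(\lceil n/2 \rceil)$, hence $g(n) = \mathrm{poly}(n)$ — and sharpening $O(1)$ to an additive constant in $k$ gives exactly $g(n) = O(\log n)$. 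I would carry out this last bookkeeping carefully: the crux is that increasing the tree's size forces its height up, each extra unit of height costs a full halving of the available point set, and two levels of recursion are "used up" per crossing episode, yielding $\log_2(\text{size}) \le 2\log_2 n \cdot O(1)$ and thus $\overline\tau_+(K_n)=O(\log n)$. The bound for $\overline\tau_-(K_n)$ is identical after reversing the edge-ordering (equivalently, ordering crossing edges by \emph{decreasing} slope and making them smaller than internal edges), since Lemma~\ref{twoedges} is symmetric under slope reversal.
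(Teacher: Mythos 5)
Your structural analysis of the recursive slope-ordering is sound: the crossing edges do form a prefix of every root-to-leaf path of an increasing tree, Lemma~\ref{twoedges} caps that prefix at two edges, and the remainder of the tree splits into subtrees each confined to one half. The gap is entirely in the quantitative conclusion, and your own attempted repairs circle it without closing it. The strongest recursion this decomposition can yield is of the form $h(n)\le 2+h(\lceil n/2\rceil)$ for the height, equivalently $k\le k'+O(1)$ per halving where $k=\log_2(\text{size})$. Your final sentence asserts that "sharpening $O(1)$ to an additive constant in $k$" gives $g(n)=O(\log n)$, but $k\le k'+O(1)$ per halving \emph{is} an additive constant in $k$, and it is exactly the multiplicative recursion $g(n)\le C\,g(\lceil n/2\rceil)$ that you had already (correctly) dismissed as giving only $g(n)=n^{O(1)}$. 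For a complete binary tree to have size $O(\log n)$ its height must be $O(\log\log n)$; with $\log_2 n$ levels of recursion available and nothing preventing the tree from gaining a constant amount of height at each level, the halving construction cannot produce a bound better than polynomial. Indeed, it appears the recursive slope-ordering genuinely admits increasing complete binary trees of size $n^{\Omega(1)}$ (repeatedly spend two crossing levels at the top, then recurse the surviving subtrees into one half), so this is not a bookkeeping issue but a failure of the ordering itself.

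The paper avoids this by choosing a different ordering in which each unit of tree height costs far more than a halving of the point set: edges are compared by $|S_e|$, the number of vertices cut off by the supporting line of $e$. One first shows that the convex hulls of the left and right subtrees of an increasing tree are disjoint, so one subtree lies in a region of fewer than $n/2$ points; then, for a vertex $v$ with children $u,w$, one of the subtrees $T_u$, $T_w$ must be contained in the region $S_{(vw)}$ or $S_{(vu)}$ cut off by the sibling edge. Writing $V(s,h)$ for the minimum number of points needed to root a height-$h$ subtree below an edge cutting off $s$ points, this yields $V(s,h)\ge V(V(s,h-1)-1,\,h-1)+s+1$, hence $V(s,h)\ge (s+1)2^{2^{h-1}}$: each added level of height forces the available point set to roughly square. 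That gives height $O(\log\log n)$ and size $O(\log n)$. If you want to salvage your write-up, you need to replace the balanced vertical cut by an ordering with this "squaring per level" property; no refinement of the constant in the halving recursion will get you below polynomial size.
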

\begin{proof}
	Let $D$ be a convex straight-line drawing of $K_n$. We give an edge-ordering of $D$ such that largest non-crossing increasing binary tree has size $O(\log n)$. The proof that $\tau_-(D) = O(\log n)$ is analogous. 
	
	The supporting line of every edge $e=(uv)$ of $D$ partitions the vertices of $D \setminus \{u,v\}$ into two sets, let $S_e$ be the smaller one. Construct an edge-ordering of $D$ such that $e < e'$ if $|S_e| < |S_{e'}|$. Let $T$ be a largest non-crossing increasing binary tree in $D$ and denote  its root by $r$.
	
	Let $L$ and $R$ be the convex hulls of the left and right subtrees of $T$, respectively. We claim that $L$ and $R$ are disjoint. Suppose this is not the case. Order the vertices of $K_n$ counterclockwise starting with $r$ and let $l', v_1, \ldots, v_s, r'$ be the vertices of $L \cup R$ as they appear in this order. Let $e$ be the left edge of $r$. No edge of $T$ can have an endpoint in $S_e$: any such edge $f$ must intersect $e$ or have both endpoints in $S_e$, in which case $f$ belongs to the left subtree of $T$; this is impossible since $|S_f|<|S_e|$. Thus, $l'$ is the root of the left subtree of $T$ and by an analogous argument $r'$ is the root of the right subtree of $T$. 	
	There must be a vertex of $R$ followed by a vertex of $L$ in this order, otherwise $L$ and $R$ are disjoint; let $v_i$ be the first such vertex of $R$ in this order. In the path that joins $v_i$ with $r'$ in $T$, there exists at least one edge $v_jv_k$ with $j \le i < k$. The vertices $l'$ and $v_{i+1}$ lie on different sides of the supporting line of $v_jv_k$, so the path that joins them in $T$ must intersect $v_jv_k$. This is a contradiction, since $T$ is non-crossing.
	
	Let $\ell$ be a straight line through $r$ that separates $L$ and $R$. The line $\ell$ partitions the vertices of $D \setminus r$ in two parts, one with less than $n/2$ vertices. Let $S$ be this part and let $T'$ be the subtree of $T$ contained in $S$. 
	
	Let $h'$ be the height of $T'$. Note that for every vertex $v$ of $T'$ with children $u$ and $w$ either the subtree $T_u$ rooted at $u$ is contained in $S_{(vw)}$ or the subtree $T_w$ rooted at $w$ is contained in $S_{(vu)}$. Assume without loss of generality that the first case happens. 
	
	\begin{figure}[h]
		\centering
		\includegraphics[page=3]{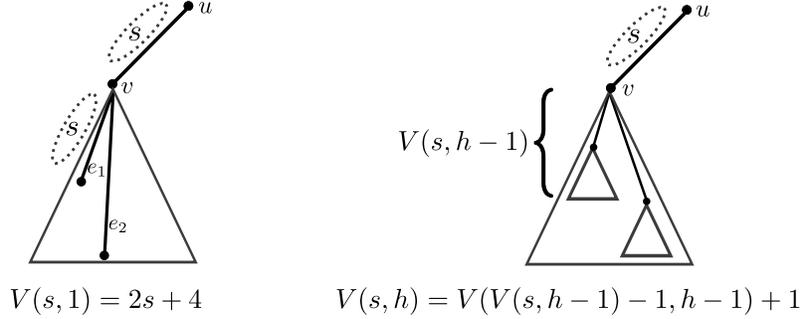}
		\caption{$T_{s, 1}$ and $T_{s, h}$.}
		\label{fig:trees}
	\end{figure}
	
	Let $T_{s,h}$ denote an increasing tree with root $u$ such that: 
	\begin{itemize}
		\item The vertex $u$ has only one child $v$ and $|S_{(uv)}| = s$
		\item The vertex $v$ is the root of a binary tree with height $h$
	\end{itemize}
	Let $V(s, h)$ denote the minimum number of vertices of $S$ needed to embed $T_{s,h}$. If $h=1$, the complete binary tree rooted at $v$ consists of two edges, $e_1, e_2$; furthermore, $|S_{(uv)}| = s$ implies that $|S_{e_1}|\ge s$ and $|S_{e_2}|\ge s+1$. Thus, $V(s, 1) = 2s+4$. (See figure \ref{fig:trees}). Note that if $h > 1$, we need at least $V(s, h-1)$ vertices to embed the left subtree of $v$, which implies that we need $V(V(s, h-1)-1, h-1)$ vertices to embed the right subtree of $v$. Thus we obtain the following recurrence: 
	
	\[V(s, h) \ge V(V(s, h-1)-1, h-1)+s+1.\]
	
	We show by induction on $h$ that $V(s, h) \ge (s+1)2^{2^{h-1}}$. This certainly holds for the base case, assume it also holds for $h-1$. We have that:
	
	\begin{eqnarray*}
		V(s, h) &\ge&  V(V(s, h-1)-1, h-1)    \\
		&\ge& V(s, h-1)\cdot 2^{2^{h-2}} \\
		&\ge& (s+1)\cdot 2^{2^{h-2}} \cdot 2^{2^{h-2}} \\
		&\ge& (s+1)2^{2^{h-1}}
	\end{eqnarray*}
	
	Note that $h' \ge V(0, h') = \Omega(2^{2^{h'-1}})$, which implies that $h' = O(\log \log n)$. The theorem follows.	
	%
\end{proof}

\begin{figure}[h]
	\centering
	\includegraphics[page=4]{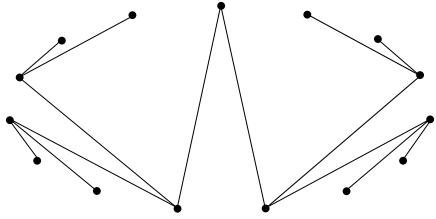}
	\caption{A decreasing binary tree of linear size under the edge-ordering used in Theorem \ref{treesconvexup1}.}
	\label{fig:badtree}
\end{figure}

The edge-ordering used in the proof of Theorem~\ref{treesconvexup1} forbids large increasing binary trees, but it is possible to find a decreasing binary tree of linear size (see Figure \ref{fig:badtree}). The proof of Theorem~\ref{treesconvexup2} gives an edge-ordering that forbids both increasing and decreasing large binary trees.

\begin{theorem} \label{treesconvexup2}
	$\overline \tau(K_n) = O(\sqrt{n \log n})$.
\end{theorem}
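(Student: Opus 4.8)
The plan is to construct an edge-ordering of a convex straight-line drawing $\widetilde K_n$ that simultaneously forbids large increasing and large decreasing non-crossing binary trees, balancing the two constraints at the scale $\sqrt{n\log n}$. The natural starting point is the ordering of Theorem~\ref{treesconvexup1}, which orders edges by the size $|S_e|$ of the smaller side of the supporting line of $e$. That ordering kills increasing trees (height $O(\log\log n)$) but, as Figure~\ref{fig:badtree} shows, a "spiralling" decreasing path-like tree of linear size survives, because a decreasing tree wants each successive edge to have a \emph{larger} $|S_e|$, and one can keep peeling off one vertex at a time from the convex hull. The fix I would try is to coarsen the ordering: group the edges into blocks by a bucketed version of $|S_e|$ (say bucket $e$ into block $\lfloor |S_e| / b \rfloor$ for a parameter $b$ to be optimized), and \emph{within} each block order the edges by slope, exactly as in the path upper bound of Theorem~\ref{thm:alphaUpper}. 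Extend across blocks by declaring all edges in a lower-index block to be smaller than all edges in a higher-index block.

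With such a two-layer ordering, a monotone non-crossing binary tree decomposes according to how its root-to-leaf paths move through the blocks. Along any root-to-leaf path, the block index is monotone (weakly), so the path splits into maximal runs of edges lying in a single block; within one block the edges are slope-ordered, so Lemma~\ref{twoedges} (applied with $\ell$ the line separating the two sides defining that block's bucket, or rather the appropriate separating line at each recursive step) should cap how much of the tree can live inside one block — giving a bound on the "width" contribution of a single block. Crossing from one block to the next changes $|S_e|$ by at least roughly $b$, and since $|S_e|$ ranges in $[0, n/2]$ there are only $O(n/b)$ blocks, bounding the "depth" in the block dimension. The size of a complete binary tree is exponential in its height, so I expect the real accounting to be: the number of block-changes along any root-leaf path is $O(n/b)$, and between consecutive block-changes the tree can only branch a bounded number of times (by the slope/Lemma~\ref{twoedges} argument, refined as in Theorem~\ref{treesconvexup1}'s recurrence), which forces the height of a monotone tree to be $O((n/b) + \mathrm{polylog})$ on one hand and... — no, that alone gives linear, so the slope layer must also be doing heavy lifting: within a single block, the analogue of the $V(s,h)$ recurrence should show that embedding height $h$ inside a block needs $\approx b \cdot 2^{2^{h}}$ vertices (the bucket width $b$ plays the role of the additive $s+1$), so a block of width $b$ with only, say, $b$ vertices available supports height only $O(\log\log b)$. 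Combining: total height $\lesssim (n/b)\cdot 1$ from block transitions times height $O(\log\log b)$ per block is still too big, so the correct trade-off must instead bound the tree \emph{size} directly — each of the $O(n/b)$ blocks contributes at most $O(\mathrm{poly}(b))$ vertices to the tree, giving size $O((n/b)\cdot b) = O(n)$ unless the per-block bound is $o(b)$. I would therefore aim to show each block contributes $O(b)$ vertices \emph{and} that the number of blocks actually used by any single monotone tree is $O(\sqrt{n\log n}/b \cdot \text{something})$; optimizing, choosing $b \approx \sqrt{n/\log n}$ balances $n/b \approx \sqrt{n\log n}$ against $b \cdot \log\log n$-type terms.

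More concretely, the cleanest route: define blocks by $|S_e|$, within each block use the recursive slope-ordering of Theorem~\ref{thm:alphaUpper}. For a monotone non-crossing binary tree $T$ of size $N$, look at a longest root-to-leaf path; it has length $\Omega(\log N)$. Each of its edges lies in some block, and block indices along it are monotone, so it visits at most $O(n/b)$ distinct blocks; within one visited block the slope-order plus non-crossing plus the convex-position partition structure limits the path to $O(\log b)$ edges (the Theorem~\ref{thm:alphaUpper} bound, since a block is essentially a sub-instance on $\le n$ points but the relevant $x$-extent where edges of bounded bucket can go is controlled by $b$ — this is the delicate geometric claim). Hence $\log N = O((n/b)\cdot 1)$ is too weak; instead $\log N \le (\text{number of blocks on the path}) \cdot O(\log b) \le O((n/b)\log b)$, still weak. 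The resolution is that the number of blocks on any single root-to-leaf path is not $n/b$ but much smaller, because each block-transition along a path of a \emph{monotone} tree forces $|S_e|$ to jump by $\ge b$, while the total variation of $|S_e|$ along a non-crossing monotone path is itself bounded — this is where I would need a new lemma: along a non-crossing monotone path in convex position, $|S_e|$ as a sequence has bounded "descents and ascents count," something like $O(\sqrt{n})$ monotone runs.

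The hard part, as this sketch already reveals, is pinning down exactly the geometric lemma that makes the two layers cooperate: quantifying how a non-crossing monotone tree in convex position is forced to "spend" vertices when it changes blocks, versus when it branches within a block. I expect the final bound to come from a recurrence of the form $V(s,h) \ge V(V(s,h-1) - 1, h-1) + \Theta(b)$ within a block (generalizing Theorem~\ref{treesconvexup1}), combined with the fact that a monotone tree can use at most $O(\sqrt{n/b})$ or so blocks before exhausting $n$ vertices — and then optimizing $b$. I would set up the block-width $b$ as a free parameter, prove a size bound of the rough shape $\overline\tau(\widetilde K_n) = O\!\left(\frac{n}{b} + b\cdot f(b)\right)$ for a slowly-growing $f$, and take $b = \sqrt{n/\log n}$ to land at $O(\sqrt{n\log n})$. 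The main obstacle is making the "within a block, use slope ordering" interact correctly with "$|S_e|$ is bucketed" — in particular verifying that Lemma~\ref{twoedges} still applies when the two sides of the relevant line are not an exact bisection but an artifact of the bucket boundary, and ensuring the non-crossing condition is strong enough to prevent a clever tree from threading many blocks cheaply.
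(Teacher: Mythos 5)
Your proposal does not close: by your own accounting, every bound you derive comes out at $O(n)$ or worse, and you defer the decisive step to an unproven ``new lemma'' about the total variation of $|S_e|$ along a non-crossing monotone path. That lemma is exactly where the approach breaks. Bucketing edges by the \emph{value} of $|S_e|$ does not prevent the spiralling configuration of Figure~\ref{fig:badtree}: a decreasing path can peel off one hull vertex at a time, so $|S_e|$ increases by $1$ per step and the path crosses a new bucket only every $b$ steps, but it still collects $\Theta(n)$ vertices in total, and nothing in the slope ordering within a bucket charges those intermediate steps against anything. More structurally, along a root-to-leaf path of a monotone tree your block index is only \emph{weakly} monotone, so a tree can place essentially all of its branching inside a single bucket where only the slope order is active --- and the slope order alone, with no control on which side of which line the bucket's edges lie, does not let you invoke Lemma~\ref{twoedges}, which needs a single fixed separating line $\ell$ with the path alternating across it. You flag this yourself as ``the delicate geometric claim,'' and it is the claim that fails.

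The paper avoids this entirely by partitioning the \emph{vertices} (not the edges) into $m=\lceil\sqrt{n/\log n}\rceil$ consecutive arcs $S_1,\dots,S_m$ of size $O(\sqrt{n\log n})$, colouring intra-group edges red and inter-group edges blue, ordering red edges within each group by the $|S_e|$ rule of Theorem~\ref{treesconvexup1}, ordering blue edges by slope, and declaring every blue edge larger than every red edge. The point of the global red~$<$~blue rule is that it forces a clean two-layer decomposition: a decreasing tree is a blue tree on top with red complete trees hanging from its leaves, and an increasing tree is a red tree on top (hence confined to a single group, so trivially of size $O(\sqrt{n\log n})$) with blue trees below. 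The blue part is bounded by $O(\sqrt{n/\log n})$ edges via a planarity argument on the $m$ contracted groups together with the one-component-per-pair claim, and each red part is bounded by $O(\log n)$ via Theorem~\ref{treesconvexup1} or by $O(\sqrt{n\log n})$ via the group size. Your instinct to combine the $|S_e|$ ordering with the slope ordering at scale $\sqrt{n/\log n}$ is the right one, but the two orderings must be separated by a vertex partition and a global comparison between the two colour classes, not interleaved along the $|S_e|$ axis; without that separation the decomposition of a monotone tree into analysable pieces is not available, and the proposal as written has no route to any sublinear bound.
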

\begin{proof}
	Let $D$ be a convex straight-line drawing of $K_n$. Let $v_1, \ldots, v_n$ denote the vertices of $D$ in counterclockwise order. Let $m=\left \lceil \sqrt{n/\log n} \right \rceil$ and partition the vertices into groups $S_1, S_2, \ldots, S_m$ of consecutive vertices such that each one has size at most $\left \lceil \sqrt{n \log n} \right \rceil$. Order the edges that have endpoints within $S_i$ using the same edge-ordering as in Theorem \ref{treesconvexup1} so that the largest non-crossing decreasing binary tree contained in $S_i$ has size $O(\log n)$. We refer to those edges as red edges and to the edges that have endpoints in different groups as blue edges. Order the blue edges by increasing slope. Furthermore, order the edges in such a way that every blue edge is greater than every red edge.

	\begin{figure}[h]
		\centering
		\includegraphics[page=5]{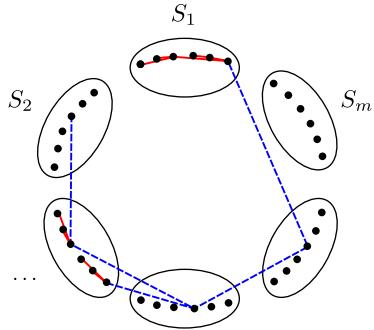}
		\caption{A  decreasing  binary tree with respect to the edge-ordering of Theorem~\ref{treesconvexup2}.}
		\label{fig:T6_dec}
	\end{figure}
	
	Let $T$ be a decreasing binary tree with respect to this edge-ordering and let $r$ be its root. Note that $T$ consists of a possibly empty blue binary tree $T_b$ and a forest of red complete binary trees such that the roots of the red trees are leaves of $T_b$ (Figure \ref{fig:T6_dec}). We claim that the subgraph $T_{i,j}$ of $T_b$ induced by blue the edges between two different groups $S_i$ and $S_j$ has at most one connected component. Suppose for the sake of a contradiction that this does not happen. Choose two connected components such that $r$ is in at most one of them, let $e$ be and edge from the first component and $f$ an edge from the second component. Suppose that one of the support lines (say, the support line of $e$) leaves $f$ and $r$ in different semiplanes. Then, since the vertices of $T_b$ are in convex position, the path from any endpoint of $f$ to $r$ must intersect $e$. Therefore, $r$ lies between the support lines of $e$ and $f$. Without loss of generality, $r$ is not in the connected component where $e$ belongs. Any path from $r$ to an endpoint of $e$ must go first through a vertex in some $S_k$, where $k \ne i, j$. This path intersects $e$ or $f$, a contradiction.
	
	Since $T_{i,j}$ has at most one connected component (which by Observation~\ref{twoedges} is a tree of height at most two), its number of edges is at most a constant. Let $u_1, \ldots, u_m$ be vertices on a circle ordered counterclockwise. Add an edge between $u_i$ and $u_j$ if and only if there is an edge in $T_b$ between $S_i$ and $S_j$. Since $T_b$ is non-crossing, the resulting graph is also non-crossing and has $O(\sqrt{n/\log n})$ edges, which implies that $T_b$ has $O(\sqrt{n/\log n} )$ edges. There are $O(\sqrt{n/\log n})$ leaves in $T_b$, thus there are $O(\sqrt{n/\log n})$ red trees and by Theorem~\ref{treesconvexup1} each has size $O(\log n)$. Thus, $T$ has size $O(\sqrt{n \log n})$.
	
	\begin{figure}[h]	
		\centering
		\includegraphics[page=6]{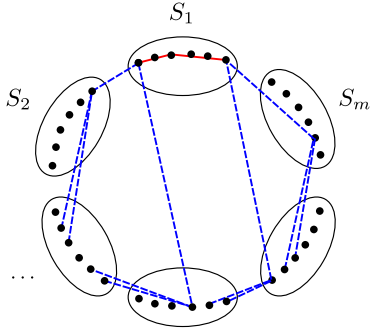}
		\caption{An increasing complete binary tree with respect to the edge-ordering of Theorem~\ref{treesconvexup2}.}
		\label{fig:T6_inc}
	\end{figure}

	Now consider an increasing complete binary tree $T$. Note that $T$ consists of a possibly empty red binary tree $T_r$ and a forest of blue binary trees such that the roots of the blue trees are leaves of $T_r$ (Figure \ref{fig:T6_inc}). The tree $T_r$ is contained in some $S_i$, thus it has size $O(\sqrt{n \log n})$. Identify all the roots of the blue trees with a vertex of $S_i$--this produces a non crossing blue tree. There are  $O(\sqrt{n \log n})$ blue edges with an endpoint in $S_i$. We can bound the number of remaining blue edges as before by $O(\sqrt{n / \log n})$. Therefore, $T$ has size $O(\sqrt{n \log n})$.
\end{proof}

\small
\printbibliography

\end{document}